\title{A finite goal set in the plane which is not a Winner}
\author{Wesley Pegden\footnote{
Department of Mathematics, 
Rutgers University (New Brunswick), 
110 Frelinghuysen Rd.,
Piscataway, NJ 08854-8019. 
Email: pegden@math.rutgers.edu}
}
\date{September 20, 2007}
\newcommand{\hhh}{{\cal H}}
\newcommand{\fff}{{\cal F}}
\newcommand{\sbs}{\subset}
\newcommand{\abs}[1]{\lvert #1 \rvert}
\newcommand{\N}{\mathbb{N}}
\newcommand{\R}{\mathbb{R}}
\newtheorem{theorem}{Theorem}[section]
\newtheorem{conjecture}[theorem]{Conjecture}
\newtheorem{fact}[theorem]{Fact}
\newtheorem{lemma}[theorem]{Lemma}
\theoremstyle{definition}
\newtheorem{definition}[theorem]{Definition}
\newtheorem{q}{}
\theoremstyle{remark}
\newcommand{\aaa}{27}
\begin{document}

      \psset{unit=2cm,dash=3pt 2pt,linewidth=.5pt,labelsep=.075,dotsize=4pt}

\FPmul{\aaat}{2}{\aaa}
\FPclip{\aaac}{\aaat}
\FPmul{\aaat}{3}{\aaa}
\FPclip{\aaad}{\aaat}

\FPdiv{\halfaaat}{\aaa}{2}
\FPclip{\halfaaa}{\halfaaat}

\FPadd{\threehalvesaaat}{\aaa}{\halfaaa}
\FPclip{\threehalvesaaa}{\threehalvesaaat}

\FPadd{\fivehalvesaaat}{\aaa}{\threehalvesaaa}
\FPclip{\fivehalvesaaa}{\fivehalvesaaat}

\FPadd{\oppt}{180}{\threehalvesaaa}
\FPclip{\opp}{\oppt}

\FPadd{\ddt}{0}{\threehalvesaaa}
\FPclip{\dd}{\ddt}

\maketitle

\begin{abstract}
J.~Beck has shown that if two players alternately select previously unchosen points from the plane, Player 1 can always build a congruent copy of any given finite goal set $G$, in spite of Player 2's efforts to stop him \cite{ttt}.  We give a finite goal set $G$ (it has 5 points)  which Player 1 cannot construct \emph{before} Player 2 in this achievement game played in the plane.
\end{abstract}

\section{Introduction}
In the $G$-achievement game played in the plane, two players take turns choosing single points from the plane which have not already been chosen.  A player achieves a \emph{weak win} if he constructs a set congruent to the \emph{goal set} $G\sbs \R^2$ made up entirely of his own points, and achieves a \emph{strong win} if he constructs such a set before the other player does so.  (So a `win' in usual terms, \emph{e.g.}, in Tic-Tac-Toe, corresponds to a strong win in our terminology.)  This is a special case of a positional hypergraph game, where players take turns choosing unchosen points (vertices of the hypergraph) in the hopes of occupying a whole edge of the hypergraph with just their own points.  \cite{Bf,ttt} contain results and background in this more general area.

The type of game we are considering here is the game-theoretic cousin of Euclidean Ramsey Theory (see \cite{euc} for a survey).  Fixing some $r\in \N$ and some  finite point set $G\sbs \R^2$, the most basic type of question in Euclidean Ramsey Theory is to determine whether it is true that in every  $r$-coloring of the plane, there is some monochromatic congruent copy of $G$.

Restricting ourselves to 2 colors, the game-theoretic analog asks when Player 1 has a `win' in the achievement game with $G$ as a goal set.  Though one can allow transfinite move numbers indexed by ordinals (see Question \ref{trans} in Section \ref{qs}), it is natural to restrict our attention to games of length $\omega$, in which moves are indexed by the natural numbers.  In this case, a weak or strong winning strategy for a player is always a finite strategy (\emph{i.e.}, must always result in weak or strong win, respectively, in a finite, though possibly unbounded, number of moves) so long as the goal set $G$ is finite.  J. Beck has shown \cite{ttt}  that both players have strategies which guarantee them a weak win in finitely many moves for \emph{any} finite goal set---the proof is a potential function argument related to the classical Erd\H{o}s-Selfridge theorem\cite{es}.   The question of when the first player has a strong win---that is, whether he can construct a copy of $G$ first---seems in general to be a \emph{much} harder problem.  (A strategy stealing argument shows that the second player cannot have a strategy which ensures him a strong win: see Lemma \ref{steal}.)

For some simple goal sets, it is easy to give a finite strong winning strategy for Player 1.  This is the case for any goal set with at most 3 points, for example, or for the 4-point vertex-set of any parallelogram.  We give a set $G$ of 5 points for which we prove that the first player cannot have a finite strong win in the $G$-achievement game (proving, for example, that such finite goal sets do in fact exist).  This answers a question of Beck (oral communication).

Fix $\theta=t\pi$, where $t$ is irrational and $t<\frac{1}{9}$. Our set $G$ is a set of 5 points $g_i$, $1\leq i\leq 5$, all lying on a unit circle $C$ with center $c\in \R^2$.  For $1\leq i\leq 3$, the angle from $g_i$ to $g_{i+1}$ is $\theta$.  The point $g_5$ (the `middle point') is the point on $C$ lying on the bisector of the angle $\angle g_2 c g_3$.  (See Figure \ref{G}.)  We call this set the \emph{irrational pentagon}.

\begin{figure}
  \begin{center}
    \begin{pspicture}(-1,-1)(1,1)

	\psline[linestyle=dotted,linewidth=1pt](1;\dd)(0,0)

	\uput*[\aaa](1;0){\small$g_1$}
	\uput*{.3}[\halfaaa](0,0){\small$\theta$}

	\uput*[\aaa](1;\aaa){\small$g_2$}
	\uput*{.3}[\threehalvesaaa](0,0){\small$\theta$}

	\uput*{.3}[\fivehalvesaaa](0,0){\small$\theta$}
	\uput*[\aaa](1;\aaac){\small$g_3$}

	\uput*[\aaad](1;\aaad){\small$g_4$}

	\uput*[\dd](1;\dd){\small$g_5$}

	\uput*[-45](0,0){\small$c$}

        \pscircle[linewidth=.5pt,linestyle=dashed,dash=3pt 2pt](0,0){1}

	\psdot(1;0)  
	\psline[linestyle=dashed](0,0)(1;0)

	\psdot(1;\aaa)
	\psline[linestyle=dashed](0,0)(1;\aaa)

	\psdot(1;\aaac)
	\psline[linestyle=dashed](0,0)(1;\aaac)

	\psdot(1;\aaad)
	\psline[linestyle=dashed](0,0)(1;\aaad)

	\psdot(1;\dd)

    \end{pspicture}
  \end{center}
\label{G}
\caption[The goal set $G$]{The goal set $G$.  Player 2 can force a draw when the goal is this set, where $\theta$ is any irrational multiple of $\pi$ less than $\frac{\pi}{9}$.}
\end{figure}
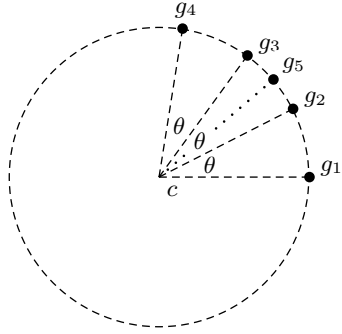

\begin{theorem}
There is no finite strong winning strategy for Player 1 in the $G$-achievement game when $G$ is the irrational pentagon.
\label{nfswin}
\end{theorem}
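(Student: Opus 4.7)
The plan is to exhibit an explicit strategy for Player 2 which prevents Player 1 from ever completing a congruent copy of $G$ strictly before Player 2 does. Since a strong win requires Player 1 to be first to build such a copy, it suffices to produce a strategy for Player 2 under which, at any move where Player 1 assembles a copy of $G$, Player 2 has already done so on an earlier move --- equivalently, Player 2 keeps pace with every 4-point threat of Player 1.

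First I would establish a rigidity lemma that exploits the hypothesis that $t$ is irrational and $t<1/9$: any four points lying in some congruent copy of $G$ uniquely determine the location of the missing fifth point, and two distinct copies of $G$ can share a 4-point subset only in ways tightly constrained by the single reflection symmetry of $G$ (the involution fixing $g_5$ and swapping $g_1\leftrightarrow g_4$, $g_2\leftrightarrow g_3$). Up to this symmetry, the 4-subsets of $G$ fall into three essential types: the self-symmetric subset $\{g_1,g_2,g_3,g_4\}$ (whose completion is the middle point $g_5$), and the asymmetric subsets $\{g_1,g_2,g_3,g_5\}$ and $\{g_1,g_2,g_4,g_5\}$, each with a mirror partner. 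These should account for the states labeled \offense, \offensep, \doffense, \doffensep, \defense in the notation of the paper's macros.

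Player 2's strategy is then presented as a reactive case analysis on the configuration of Player 1's points after each of his moves. When Player 1's new move creates four of his own points lying inside a single copy of $G$, Player 2 plays the unique fifth point dictated by the rigidity lemma. When Player 1 has not yet reached such a configuration, Player 2 plays a \emph{preparatory} move, chosen so that later 4-point threats of Player 1 can always be answered.

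The main obstacle will be handling \emph{double threats} --- moves by Player 1 that simultaneously create two 4-point subsets lying in distinct copies of $G$, each requiring a distinct fifth point, so that Player 2 cannot block both with a single response. The rigidity lemma forces any such configuration to have a very constrained geometry, and the bulk of the technical work will be a case-by-case verification showing that either such double threats cannot arise (because the irrationality of $\theta$ rules out the required coincidence of two distinct copies of $G$) or else that one of the two required fifth points has already been played by Player 2 on an earlier turn in response to an appropriate preparatory configuration.
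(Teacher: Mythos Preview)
Your approach is genuinely different from the paper's, and as stated it has a real gap.

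The paper's strategy for Player~2 is \emph{proactive}, not reactive: Player~2 first drifts far from Player~1 to reach a controlled position $(\star)$, then starts building his own arc $h_1,h_2,h_3$ on a unit circle $C$, and thereafter keeps extending it to produce an unending chain of single threats that Player~1 must block. The case analysis (Cases~1--3) and the geometric lemma about three mutually intersecting unit circles are invoked only \emph{after} Player~2 has seized the initiative, in a position where Player~1 has at most one or two points near $C$. That constraint is precisely what makes the double-threat analysis in Case~3 go through.

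Your strategy, by contrast, is purely reactive: wait until Player~1 has four points on an unblocked copy of $G$, then play the unique fifth point. The gap is in the double-threat case, which you acknowledge but do not actually resolve. If Player~2 does nothing proactive, Player~1 can pick two congruent copies $G_1,G_2$ of $G$ sharing a single point $p$ and otherwise disjoint, spend six moves placing three points on each of $G_1\setminus\{p\}$ and $G_2\setminus\{p\}$ (none of these moves creates a 4-threat, so your strategy does not block anything relevant), and then play $p$. Now Player~1 has two 4-point threats with distinct completion points and Player~2 cannot answer both. No rigidity statement about 4-subsets of a single copy of $G$ can rule this out, and your ``preparatory'' moves are never specified. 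What the paper's argument buys is exactly that Player~2's own forcing pins down where Player~1's nearby points can lie, so the only conceivable double threats are the three-circle configurations that Lemma~\ref{noclose} excludes.

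A smaller point: your stated target --- that whenever Player~1 completes a copy of $G$, Player~2 already has one --- would be a strong win for Player~2, which strategy stealing forbids. You presumably mean that under your strategy Player~1 never completes a copy of $G$; that should be said correctly.
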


\noindent\textbf{Idea:} Let $\theta^n_c(x)$ denote the image of $x\in \R^2$ under the rotation $n\theta$ about the point $c$.  An important property of the irrational pentagon is that once a player has threatened to build a copy of it by selecting all the points $g_1,g_2,g_3,g_4$, he can give a new threat by choosing the point $\theta_c(g_4)$  or $\theta_c^{-1}(g_1)$. Furthermore, since $\theta$ is an irrational multiple of $\pi$, the player can continue to do this indefinitely, tying up his opponent (who must continuously block the new threats by selecting the corresponding middle points) while failing himself to construct a copy of $G$.  If Player 1 is playing for a finite strong win, he cannot let Player 2 indefinitely force in this manner.  However, to deny Player 2 that possibility, we will see that Player 1's only option is the same indefinite forcing, which leaves him no better.  The rest of the rigorous proof is a case study.

\section{The Proof}
For the proof of Theorem \ref{nfswin}, we will need the following lemma.

\begin{lemma} 
There are no three unit circles $C_1,C_2,C_3$ so that the pairs $C_i$,$C_j$ each intersect at 2 distinct points $x_{ij}$ and $y_{ij}$, so that the angles $\angle x_{ij}c_iy_{ik}$ are less than $\frac{\pi}{3}$ for all $j\neq i\neq k$.\label{noclose}
\end{lemma}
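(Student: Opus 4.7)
My plan is to derive $\gamma_i:=\angle c_jc_ic_k<\pi/3$ at every center, contradicting the angle-sum $\gamma_1+\gamma_2+\gamma_3=\pi$ of the triangle $c_1c_2c_3$. The main obstacle is a wrap subcase where $\gamma_i>2\pi/3$ instead, which I will dispatch by a parity argument on the labels.

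Let $\alpha_{ij}:=\arccos(|c_ic_j|/2)\in(0,\pi/2)$, the angle $\angle c_jc_ix_{ij}$ (via the isoceles triangle $c_ic_jx_{ij}$ with unit sides $c_ix_{ij}$ and $c_jx_{ij}$), so that on $C_i$ the intersections with $C_j$ are at angular positions $\pm\alpha_{ij}$ from the direction to $c_j$, and similarly for $C_k$. The two hypotheses at $c_i$, $\angle x_{ij}c_iy_{ik}<\pi/3$ and $\angle x_{ik}c_iy_{ij}<\pi/3$, are the edges of a perfect matching between the pairs $C_i\cap C_j$ and $C_i\cap C_k$ with both matched arcs on $C_i$ shorter than $\pi/3$. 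Introduce signs $\eta_{ij}\in\{\pm1\}$ by placing $x_{ij}$ at angle $\eta_{ij}\alpha_{ij}$ from $c_ic_j$ as viewed from $c_i$. In the \emph{crossed} case $\eta_{ij}\eta_{ik}=-1$, the two matched arcs come out to $|\gamma_i\pm(\alpha_{ij}-\alpha_{ik})|$, and since $\max(|\gamma_i+t|,|\gamma_i-t|)=\gamma_i+|t|$, this forces $\gamma_i+|\alpha_{ij}-\alpha_{ik}|<\pi/3$, in particular $\gamma_i<\pi/3$. In the \emph{aligned} case $\eta_{ij}\eta_{ik}=+1$, the two matched arcs are $|\gamma_i-\alpha_{ij}-\alpha_{ik}|$ and the cyclic angle $\min(\gamma_i+\alpha_{ij}+\alpha_{ik},\,2\pi-\gamma_i-\alpha_{ij}-\alpha_{ik})$; this again gives $\gamma_i<\pi/3$ unless we are in the wrap subcase $\gamma_i+\alpha_{ij}+\alpha_{ik}>5\pi/3$, which (using $\alpha_{ij}+\alpha_{ik}<\pi$) instead forces $\gamma_i>2\pi/3$.

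If no center is in the wrap subcase, $\sum_i\gamma_i<\pi$, a contradiction; so at most, and hence exactly, one center, say $c_1$, is aligned-wrap, with $\gamma_1>2\pi/3$ and $\gamma_2,\gamma_3<\pi/3$. To rule this out I would invoke the orientation identity $\eta_{ji}=-\eta_{ij}$ (the intersection on one side of the line $c_ic_j$ is CCW as seen from one center and CW as seen from the other), which yields $(\eta_{12}\eta_{13})(\eta_{21}\eta_{23})(\eta_{31}\eta_{32})=(\eta_{12}\eta_{21})(\eta_{13}\eta_{31})(\eta_{23}\eta_{32})=-1$, so an odd number of centers are crossed. Since $c_1$ is aligned, exactly one of $c_2,c_3$ is crossed; say $c_3$. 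Then the aligned-wrap at $c_1$ gives $\alpha_{12}+\alpha_{13}>2\pi/3+\gamma_2+\gamma_3$, the aligned-small at $c_2$ gives $\alpha_{12}+\alpha_{23}<\pi/3-\gamma_2$, and the crossed at $c_3$ gives $|\alpha_{13}-\alpha_{23}|<\pi/3-\gamma_3$; subtracting the second from the first yields $\alpha_{13}-\alpha_{23}>\pi/3+2\gamma_2+\gamma_3$, which together with the third forces $\gamma_2+\gamma_3<0$, the final contradiction. The parity identity $\eta_{ji}=-\eta_{ij}$ is the crux: it is what prevents the wrap subcase from standing alone at one center and so finishes the proof.
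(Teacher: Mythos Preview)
Your argument is correct, and it is genuinely different from the paper's proof. The paper argues extremally: letting $B_i$ be the unit disk bounded by $C_i$, it picks the pair $C_i,C_j$ with $A(B_i\cap B_j)$ maximal, notes that the angular hypothesis forces the two points $x_{jk},y_{jk}$ of $C_j\cap C_k$ to lie on the short arc of $C_j$ between the tangent points $r_1,r_2$ from $c_i$, and observes that any unit circle meeting $C_j$ inside that arc has center strictly closer to $c_i$ than $c_j$ is, giving $A(B_i\cap B_k)>A(B_i\cap B_j)$, a contradiction. Your route instead reads the two hypotheses at each center as a perfect matching, reduces to the dichotomy $\gamma_i<\pi/3$ versus the aligned-wrap case $\gamma_i>2\pi/3$, and then kills the single surviving wrap vertex with the parity identity $\eta_{ji}=-\eta_{ij}$ and a linear combination of the three inequalities. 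The paper's proof is shorter and more visual but leans on the picture for the tangent-arc claim; yours is longer but entirely self-contained and in fact uses only the two ``cross'' constraints $\angle x_{ij}c_iy_{ik}$, $\angle x_{ik}c_iy_{ij}$ at each center, never the $j=k$ cases, so it proves a marginally stronger statement. One small point worth making explicit: in your crossed case you silently use that neither of the arcs $|\gamma_i\pm(\alpha_{ij}-\alpha_{ik})|$ can wrap past $\pi$, which follows since $|\alpha_{ij}-\alpha_{ik}|<\pi/2$ would then force $\gamma_i>7\pi/6$.
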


\begin{figure}
  \begin{center}
    \begin{pspicture}(-1,-1)(3,1)

        \pscircle(1.8,0){1}
	\uput*[0](1.8,0){\small$c_i$}

	\psdot(1.8,0)

	\psdot(0,0)

	\uput*[-25](1;25.8419){\small$x_{ij}$}
	\uput*[25](1;-25.8419){\small$y_{ij}$}
	\psline[linestyle=dashed](0,0)(1;25.8419)
	\psline[linestyle=dashed](0,0)(1;-25.8419)
	\psdot(1;25.8419)
	\psdot(1;-25.8419)

	\uput*[-56.24101](1;-56.25101){\small$r_2$}
	\psline(1.8,0)(-.0666666,1.2472191)
	\psdot(1;-56.25101)
	\psline[linestyle=dashed](0,0)(1;-56.25101)

	\uput*[56.24101](1;56.25101){\small$r_1$}
	\psline(1.8,0)(-.0666666,-1.2472191)
	\psdot(1;56.25101)
	\psline[linestyle=dashed](0,0)(1;56.25101)

	\uput*[180](0,0){\small$c_j$}
        \pscircle(0,0){1}

    \end{pspicture}
  \end{center}
\label{circs}
\caption{Proving Lemma \ref{noclose}}
\end{figure}

\begin{proof}
Let $B_i$ denote the unit ball whose boundary is $C_i$ for each $i$, and choose $C_i$ and $C_j$ from $\{C_1,C_2,C_3\}$ so that the area $A(B_i\cap B_j)$ is maximal.  In Figure 2, for any $C_k$ intersecting the circle $C_j$ at points $x_{jk},y_{jk}$ lying on $C_j$ between $r_1$ and $r_2$, we would have $A(B_i\cap B_k)>A(B_i\cap B_j)$, a contradiction.  The maximum angle between the points $r_1$ and $r_2$ on $C_j$ is $\frac{2\pi}{3}$.
\end{proof}

We are now ready to prove Theorem \ref{nfswin}.  Let $G$ now denote the irrational pentagon.

It is clear that Player 2 can either play indefinitely or reach a point where it is his move, he has a point $h_1$ at least 10 units away from any of Player 1's points, and Player 1 has no more than 2 points in any given (closed) ball of radius 10. (For example: on each turn until he has reached this point, Player 2 moves at least 30 units away from all of Player 1's points.)  Reaching this point, Player 2 begins to build a copy of $G$; that is, he arbitrarily designates some `center point' $c$ at unit distance from the point $h_1$, and chooses as his move a point $h_2$ which is an angle $\theta$ away from $h_1$ on the unit circle $C$ centered at $c$.  In fact, $h_1$ and $h_2$ lie on two unit circles which are disjoint except at $h_1,h_2$, and so Player 1's response can lie on only one of them; thus we assume without loss of generality that his response does not lie on the circle $C$.

Following Player 1's response, Player 2 will continue constructing his threat by choosing the point $h_3$ which lies on the circle $C$ and is separated from the points $h_1,h_2$ by angles $2\theta,\theta$, respectively.  Thus regardless of Player 1's choice of response, we see that Player 2 can reach the following situation:

\leavevmode\reversemarginpar\marginpar{\hfill$(\star)$}
It is Player 1's turn, Player 2 has points $h_1,h_2$,$h_3$ separated consecutively by angles $\theta$ on a unit circle $C$ centered at $c$, and Player 1 has at most $3$ points in any  unblocked copy of $G$.  Finally, Player 1 does not control 4 points of any unblocked copy of $G$, and  controls at most one point within 8 units of $c$. 

\leavevmode\reversemarginpar\marginpar{\hfill$(\star\star)$}
Moreover, there is in fact at most one unblocked copy of $G$ on which Player 1 has 3 points, and, if it exists, Player 1 controls no other points within (say) 5 units of those 3 points.

\vspace{1em}
We classify the rest of the proof into Cases 1,2,3 based on Player 1's move.  The analysis in Cases 1 and 3 depend just on the conditions in paragraph $(\star)$, while Case 2 depends on the conditions in both paragraphs $(\star)$ and $(\star\star)$.

\textbf{Case 1:}
A natural response for Player 1 might be to play on the circle $C$, thus attempting to prevent Player 2 from building a significant threat.  Since no point is a rotation of $h_1$ about the point $c$ by both positive and negative integer multiples of $\frac{\theta}{2}$, we may assume WLOG that Player 1 does not choose any rotations of $h_1$ about $c$ by positive integer multiples of $\frac{\theta}{2}$.  Thus Player 2 responds by choosing the point $h_4$ on $C$ which is at an angle $\theta,2\theta,3\theta$ from the points $h_3,h_2,h_1$, respectively.  Since Player 2 is now threatening to build a copy of $G$ on his next move and Player 1 is not (he has $\leq  3$ points on any unblocked copy of $G$), Player 1 must take the point on $C$ which together with $h_1,h_2,h_3,h_4$ complete a copy of $G$.  Player 2's response is naturally to choose the point $h_5$ on $C$ at angle $\theta,2\theta,3\theta$ from $h_4,h_3,h_2$, and we are in essentially the same situation: Player 1 has always at most 3 points in any unblocked congruent copy of $G$ (since he has only one point `near' $C$ which is not on $C$, and any set congruent to $G$ and not on $C$ intersects $C$ in at most 2 points), and Player 2 can force indefinitely.

\textbf{Case 2:}
Another response for Player 1 which may be possible is to play within the vicinity of his previously chosen points such that he controls 4 points of an unblocked copy of $G$.  By $(\star\star)$ Player 1 has only one 4-point threat, and so Player 2 can choose the corresponding fifth point to avoid losing.  Now, Player 1 may be able to continue to make threats on his subsequent moves, but it is easy to check using the conditions of $(\star\star)$ that his moves will have to stay on a single unit circle $C_1$ to do so, and that he will never be able to generate more than one threat, and thus never be able end his indefinite forcing with a win.  On the other hand, each time it is Player 1's move, the conditions in paragraph $(\star)$ are still satisfied, and so any move other than a continuation of the forcing will allow the analysis from Cases 1 and 3 to apply.

\textbf{Case 3:}  Finally, we consider the case where Player 1 does `none of the above'; that is, he chooses a point not on the circle $C$, but which nevertheless does not increase to 4 the number of points he controls in some congruent copy of $G$.  This is the case where we make use of Lemma \ref{noclose}.

  Player 1 now has as many as two points within 8 units distance of the point $h_1$.  By choosing successively points $h_4,h_5,h_6,$ etc., as in Case 1, Player 2 hopes to successively force Player 1 to take the corresponding fifth point of each congruent copy of $G$ that Player 2 threatens to build at each step.  The only snag is this: it is conceivable that Player 1, in taking these corresponding `fifth' points, builds his own threat.  He already has two points in the vicinity, and it is possible that they lie on a congruent copy of $G$ which intersects the circle $C$ in two points which Player 1 will eventually be forced to take by Player 2's moves.  In this case, Player 2 would have to respond and could conceivably end up losing the game if Player 1 is able to break is forcing sequence.

Of course, this is only truly a problem if Player 1 is threatening this in `both directions'---that is, regardless of whether $h_4$ is at angles $\theta,2\theta,3\theta$ to the points $h_3,h_2,h_1$, respectively, or to the points $h_1,h_2,h_3$, respectively.  However, such a double threat is immediately ruled out by Lemma \ref{noclose}, since this would require two sets $S_1,S_2\cong G$ (each a subset of a $3\theta$-arc of a unit circle) intersecting each other in two points (previously chosen by Player 1) and each also each intersecting $C$ in two places.  This completes the proof.\qed

\section{Further Questions}
\label{qs}
\begin{q}
Our (rather crude) methods do not appear suited to much larger goal sets.  So we ask: are there arbitrarily large goal sets $G$ for which Player 1 cannot force a finite strong win in the $G$-achievement game played in $\R^2$?
\end{q}

\begin{q}
We have examples of 4-point sets for which Player 1 has strong winning strategies, and we have given here a 5-point example where Player 2 has a drawing strategy.  Are there 4-point sets where Player 2 has a drawing strategy?
\end{q}

\begin{q}
Player 1 can easily be shown to have strong winning strategies for any goal set of size at most 3, and any 4-point goal set which consists of the vertices of a parallelogram.  It is not difficult to give a 5 point goal set for which Player 1 can be shown to have a strong winning strategy.  Are there arbitrarily large goal sets $G$ for which Player 1 has a strong winning strategy?
\end{q}

\begin{q}
We restricted our attention here to the first $\omega$ moves, and indeed, our proof does not show that Player 1 can't force a strong win if transfinite move numbers are allowed.  So we ask: are there finite sets $G$ for which Player 1 cannot force a strong win, when the players make a move for each successor ordinal?\label{trans}
\end{q}

\begin{q}
In the general achievement game played on a hypergraph (in which the two players select vertices, and the goal sets are the edges)
 we define some stronger win types for Player 1:
\psset{xunit=1.4cm,yunit=1.4cm}
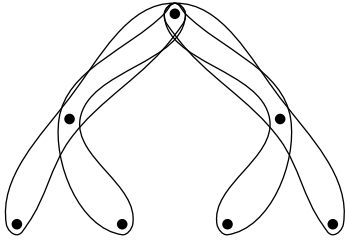
\begin{figure}
\begin{center}
\begin{pspicture}(0,0)(4,2)
\psdot*(2,2)

\psdot*(1,1)  \psdot*(3,1)

\psdot*(.5,0)  \psdot*(1.5,0) \psdot*(2.5,0) \psdot*(3.5,0)

\psccurve[showpoints=false](2,2.1)(1,1.2)(.4,0)(.5,-.1)(.6,0)(1,.8)(2,1.8)(2.1,2)(2,2.1)

\psecurve[showpoints=false](1.9,2)(2,2.1)(3,1.2)(3.6,0)(3.5,-.1)(3.4,0)(3,.8)(2,1.8)(1.9,2)(2,2.1)(3,1.2)

\psccurve[showpoints=false](2,2.1)(1.9,2)(2.9,1)(2.4,0)(2.5,-.1)(3.1,1)(2.1,2)(2,2.1)

\psecurve[showpoints=false](1.9,2)(2,2.1)(2.1,2)(1.1,1)(1.6,0)(1.5,-.1)(.9,1)(1.9,2)(2,2.1)(3,1.2)

\end{pspicture}
\end{center}
\caption{The hypergraph $\hhh_T$, in the case where $T$ is the balanced binary directed tree of depth 2.\label{HT}}
\end{figure}
\begin{definition}
In the achievement game played on a hypergraph $\hhh$, Player 1 has a \emph{fair win} if he builds some $e\in E(\hhh)$ on a turn which comes before any turn on which Player 2 builds some $f\in E(\hhh)$.\label{fair}
\end{definition}
Each `turn' of the game consists of a move by Player 1 followed by a move by Player 2.  Definition \ref{fair} requires simply that Player 1 builds a goal set in fewer turns than it takes Player 2 to do the same (if Player 2 can at all).
\begin{definition}
In the achievement game played on a hypergraph $\hhh$, Player 1 has an \emph{early win} if he builds some $e\in E(\hhh)$, say in $n$ moves, such that there is no $m\leq n$ for which Player 2 had $\abs{e}-1$ points of a set $e\in E(\hhh)$ on his $m$th turn, and on which Player 1 had no point on his $m$th turn.
\end{definition}

So every early win is a fair win, and every fair win is a strong win.  In general, none of the win types we have defined are the same, and they all occur for Player 1 for some hypergraph: Already for $K_4$, Player 1 has a strong win but not a fair win.  On the hypergraph $\hhh_T$, whose vertices are the vertices of some balanced binary directed tree $T$, and whose edges are the vertex-sets of longest directed paths in $T$ (Figure \ref{HT}), Player 1 has a fair win and an early win.  Finally, let the hypergraph $\fff_n$ have vertex set $[n]\times \{0,1\}$.  Edges are of two types: Type 1 edges are the $n$-subsets $S\sbs [n]\times\{0,1\}$ for which the $\pi_1(S)=[n]$ and $(1,0)\in S$, and Type 2 edges are all the pairs $\{(m,0),(m,1)\}$ where $m\in [n]$ (see Figure \ref{Fn}).  Player 1 has a fair win in $\fff_n$ for $n\geq 2$, but not an early win.  Probably, however, the situation is not so rich in the plane:

\begin{conjecture} There is no finite point set $G\sbs \R^2$ for which Player 1 has a strategy which ensures a fair win in the $G$-achievement game played in the plane.
\end{conjecture}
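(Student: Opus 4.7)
I propose that Player~$2$ use a mirror strategy via an isometry $T$ of $\R^2$. The natural choice is $T = T_v$, translation by a vector $v$ with $|v| > \diam(G)$, chosen by Player~$2$ after seeing Player~$1$'s first move (so that $v$ is generic with respect to it). Denoting Player~$1$'s $k$th move by $p_k$, Player~$2$'s strategy is to play $q_k = T(p_k) = p_k + v$ whenever this point is still unchosen, and otherwise to play a carefully chosen ``filler''.

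First I would observe that perfect mirroring already rules out a fair win. If Player~$2$ plays $T(p_k)$ on every turn $k \le n$, then his position satisfies $B_k = T(A_k)$ throughout (where $A_k$ is Player~$1$'s position), so a congruent copy of $G$ first appears in $B_k$ on exactly the same turn one first appears in $A_k$. In particular, if Player~$1$ completes a new copy $G_0 \cong G$ on turn $n$ with completing move $p_n \in G_0$, then Player~$2$ completes the copy $T(G_0) \cong G$ on the same turn via the move $T(p_n)$. Since a fair win demands Player~$1$ finish \emph{strictly} before Player~$2$, no fair win occurs.

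The main obstacle is sabotage: given perfect mirroring up to turn $k-1$, the image $T(p_k) = p_k + v$ can be unavailable only if Player~$1$ has just played $p_k = p_j - v$ for some $j < k$. The choice $|v| > \diam(G)$ makes each $T_v$-orbit $\{x + nv : n \in \Z\}$ meet every copy of $G$ in at most one point, so the sabotaging point $p_k$ and its partner $p_j$ cannot both lie in any single copy---in this sense sabotage is geometrically expensive for Player~$1$. The central technical challenge is to show that Player~$2$ can still complete a copy of $G$ by the same turn as Player~$1$ despite possibly many interleaved sabotages; sabotage is delicate because Player~$1$'s copy $G_0$ can itself contain the sabotaging point $p_k$, and then $T(G_0) \ni p_j \in A_n$, so the naive mirror copy no longer lies in Player~$2$'s set. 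Likely refinements include having Player~$2$ use a secondary isometry (e.g.\ try $T^{-1}(p_k)$, or pass to a point-reflection) as a fallback on sabotage, or a potential-function bookkeeping in the spirit of Beck's Erd\H{o}s--Selfridge-style proofs~\cite{ttt} that transfers potential from Player~$1$'s threats to Player~$2$'s each time a sabotage occurs. Making such an analysis rigorous across all configurations of multiple sabotages and interlocking copies of $G$ looks substantial, and is, I believe, why the statement is offered only as a conjecture.
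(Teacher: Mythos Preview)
The statement you are attempting is labeled a \emph{Conjecture} in the paper; there is no proof to compare against. The paper's only remark on it is that in the punctured plane $\R^2\setminus\{c\}$, Player~2 can prevent a fair win by playing the central reflection $2c-p_k$ of Player~1's last move, and that ``even proving that Player~1 cannot have an early win for any $G$ when playing in $\R^2$ may be very difficult.''

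Your mirror idea is in the same spirit, but your specific choice of isometry is weaker than the paper's. Central reflection is an \emph{involution}: if Player~1 could sabotage by playing some $p_k$ with $2c-p_k=p_j$ already occupied by Player~1, then $p_k=2c-p_j=q_j$ would already be a Player~2 point, so $p_k$ was not a legal move. Hence in $\R^2\setminus\{c\}$ the reflection strategy admits \emph{no sabotage whatsoever}; the single fixed point $c$ is the only obstruction in the full plane. By contrast, translation $T_v$ has infinite orbits, and Player~1 can sabotage already on turn~2 (play $p_1=0$, then $p_2=-v$; now $T_v(p_2)=0=p_1$ is occupied). So your framework faces an unbounded sequence of sabotages where the paper's faces exactly one, yet the paper still regards that single obstruction as a serious obstacle.

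You correctly diagnose the core difficulty (the mirrored copy $T(G_0)$ may contain a Player~1 point after a sabotage) and you are right that this is why it remains a conjecture. If you pursue this further, I would switch to an involutive isometry (point reflection or a line reflection through a line missing all of Player~1's points so far) rather than translation, so that the sabotage set is a single point or line rather than an infinite lattice of potential collisions; the bookkeeping you envision then has to handle only finitely many exceptional moves per copy of $G$.
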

 The conjecture may seem painfully obvious.  If we play the achievement game in $\R\setminus\{c\}$ for any point $c\in \R^2$, for example, Player 2 can prevent a fair win by always choosing the point which is the central reflection across $c$ of Player 1's last move.  Annoyingly, even proving that Player 1 cannot have an early win for any $G$ when playing in $\R^2$ may be very difficult.

\psset{xunit=1.4cm,yunit=1.4cm}
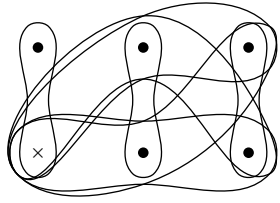
\begin{figure}
\begin{center}
\begin{pspicture}(.9,-.5)(3,1.5)

\psdot(3,0)\psdot(3,1)
\psccurve(2.9,-.2)(2.9,.5)(2.9,1.2)(3.1,1.2)(3.1,.5)(3.1,-.2)

\psdot(2,0)\psdot(2,1)
\psccurve(1.9,-.2)(1.9,.5)(1.9,1.2)(2.1,1.2)(2.1,.5)(2.1,-.2)

\psdot[dotstyle=x](1,0)\psdot(1,1)
\psccurve(0.9,-.2)(0.9,.5)(0.9,1.2)(1.1,1.2)(1.1,.5)(1.1,-.2)

\psccurve(.8,-.2)(2,-.3)(3.2,-.2)(3.2,.2)(2,.3)(.8,.2)
\psccurve(.8,-.2)(2,.7)(3.2,.8)(3.2,1.2)(2,1.3)(.8,-.2)
\psccurve(.8,-.2)(2,-.3)(3.2,.8)(3.2,1.2)(2,.3)(.8,.2)
\psccurve(.8,-.2)(2,.7)(3.2,-.2)(3.2,.2)(2,1.3)(.8,-.2)

\end{pspicture}
\end{center}
\caption{The hypergraph $\fff_3$.  There are four (in general $2^{n-1}$) Type 1 edges, and three (in general $n$) Type 2 edges. (The vertex $(1,0)$ is marked with $\times$.)\label{Fn}}
\end{figure}

For the sake of completeness, we note the situation on the hypergraph $\hhh_T$ is in some way the worst possible for Player 2.  It is easy to see that although Player 2 never occupies all but one vertex of an unblocked edge when playing on $\hhh_T$, it is easy for him to occupy all but one vertex of some edge which may be blocked.  The natural strengthening of the `early win' suggested here never occurs for Player 1:

\begin{definition}
In the achievement game played on a hypergraph $\hhh$, Player 1 has a \emph{humiliating win} if he occupies some $e\in E(\hhh)$ before Player 2 occupies all but one vertex of some edge $f\in E(\hhh)$.
\label{hum}
\end{definition}
\noindent(So every humiliating win is an early win.)  The fact that Player 1 never has a humiliating win will follow from the strategy stealing argument; we include the proof for completeness.
\begin{lemma}[Strategy Stealing]
On any hypergraph $\hhh$, a second player cannot have a strategy which ensures strong win in the achievement game.
\label{steal}
\end{lemma}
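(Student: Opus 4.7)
The plan is the classical strategy-stealing argument. Suppose for contradiction that Player 2 has a strategy $\sigma$ ensuring him a strong win on $\hhh$. The goal is to convert $\sigma$ into a Player 1 strategy $\sigma'$ that also ensures a strong win; since the two players cannot each complete an edge strictly before the other, this is the desired contradiction.

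Here is how I would define $\sigma'$. Player 1's first move is a \emph{throwaway}: he picks an arbitrary vertex $v_0 \in V(\hhh)$. From then on he maintains in his head a \emph{shadow game} in which $v_0$ has never been played, and plays the role of Shadow Player 2 in the shadow game according to $\sigma$. Concretely, whenever the real Player 2 plays a move $u$, Player 1 reads off $\sigma$'s response to $u$ in the shadow game and plays it in the real game. If $\sigma$ ever instructs him to play a vertex already occupied in the real game, he instead picks an arbitrary unused vertex, records it as a further throwaway in his shadow bookkeeping, and continues.

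Correctness will follow from the monotonicity invariant: at every point in the game, the sequence of Real Player 2's moves equals, move-for-move, the sequence of Shadow Player 1's moves, while the set of Real Player 1's vertices properly contains the set of Shadow Player 2's vertices (the difference being the throwaways). Consequently, any edge completed by Shadow Player 2 on her $n$th shadow move is completed by Real Player 1 no later than her $(n+1)$st real move (counting the opening throwaway), and any edge completed by Real Player 2 on her $m$th real move is simultaneously completed by Shadow Player 1 on her $m$th shadow move. Since $\sigma$ guarantees that Shadow Player 2 completes an edge strictly before Shadow Player 1 does, Real Player 1 completes an edge strictly before Real Player 2 does, giving Real Player 1 a strong win.

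I do not anticipate a substantive obstacle; the argument is essentially Nash's classical strategy steal. The only care required is the bookkeeping of throwaway moves in the event that $\sigma$ instructs Player 1 to play an already-occupied vertex, which is routine: a fresh throwaway is available until the real game has exhausted $V(\hhh)$, and at that point the shadow game has ended as well. The entire content of the argument is that in an achievement game, extra vertices never hurt one player's chances of completing an edge first.
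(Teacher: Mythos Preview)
Your proposal is correct and is essentially identical to the paper's proof: both carry out the classical strategy-stealing argument, with your ``throwaway'' vertex playing the role of the paper's ``ghost move'' $g$, and both handle the collision case (when $\sigma$ asks for the already-occupied throwaway) by reassigning the ghost/throwaway to a fresh arbitrary vertex. The shadow-game bookkeeping and monotonicity invariant you describe are just a slightly more explicit packaging of the same reasoning.
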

\begin{proof}
The proof of Lemma \ref{steal} is the strategy stealing argument; we include the proof for completeness.   We argue by contradiction:   if the second player has a strong win strategy $\sigma$ (a function from game positions to vertices), the first player makes an arbitrary first move $g$ (his ghost move).  Now on each move, the first player mimics the second player's strategy by ignoring his ghost move: formally, let $G_n$ denote the game's position on the $n$th move, and let $G_n\setminus x$ denote the game position modified so that the vertex $x$ is unchosen.  Then on each turn, the first player chooses the point $\sigma(G_n\setminus g)$ if it is not equal to $g$ (and thus must be unoccupied, since $\sigma$ is a valid strategy), or, if $\sigma(G_n\setminus g)=g$, the first player chooses an arbitrary point $x\in V(\hhh)$ and sets $g:=x$.  The fact that $\sigma$ was a `strong win' strategy for the second player implies that the first player will occupy all of an edge $e\in E(\hhh)$ (even requiring $e\not\ni g$) before the second player occupies all some some edge $f\in E(\hhh)$.  In particular, the first player has a strong win, a contradiction.
\end{proof}
\begin{fact}
On any hypergraph $\hhh$, Player 2 can prevent Player 1 from achieving a humiliating win.
\label{nohum}
\end{fact}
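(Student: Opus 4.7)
The plan is to adapt the strategy-stealing argument of Lemma \ref{steal} to the setting where Player 2 steals a putative humiliating-win strategy of Player 1. Suppose, for contradiction, that Player 1 has a strategy $\sigma$ guaranteeing a humiliating win against every opponent. Player 2 plays according to $\sigma$ by treating herself as shadow Player 1 in a parallel shadow game whose state deliberately ignores real Player 1's first move $p_1$: on each turn she applies $\sigma$ to the shadow state and plays the recommended vertex, using the standard ghost-substitution trick (play an arbitrary unoccupied vertex and record the substitution) whenever $\sigma$ recommends an already-occupied vertex, exactly as in the proof of Lemma \ref{steal}. Simultaneously, let real Player 1 also use $\sigma$.

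Write $p_k$ for real Player 1's $k$th move and $q_k$ for real Player 2's $k$th move. In the shadow game, shadow Player 1 plays $\sigma$ with moves drawn from $q_1, q_2, \ldots$ (modulo substitutions), against shadow Player 2 whose moves are $p_2, p_3, \ldots$. By the humiliating-win hypothesis applied to the shadow game, shadow Player 1 completes some edge $e \in E(\hhh)$ at her $N$th shadow move while shadow Player 2's first $N - 1$ moves $\{p_2, \ldots, p_N\}$ contain no $|f| - 1$ vertices of any edge $f$. The substitution-record structure is self-cancelling, so after $N$ real moves real Player 2's vertex set differs from shadow Player 1's by at most a single-vertex swap (exchanging $p_1$ for the latest substitute), and hence real Player 2's first $N$ moves contain at least $|e| - 1$ vertices of $e$.

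Applying the humiliating-win hypothesis to the real game as well, real Player 1 completes some edge $e'$ at his $n'$th move while real Player 2's first $n' - 1$ moves contain no $|f| - 1$ vertices of any edge. I split into two cases. If $n' \leq N$, then $e' \subseteq \{p_1, \ldots, p_{n'}\}$, so $e' \setminus \{p_1\} \subseteq \{p_2, \ldots, p_{n'}\}$ gives shadow Player 2 at least $|e'| - 1$ vertices of $e'$ among her first $n' - 1 \leq N - 1$ moves, contradicting the shadow humiliating-win condition. If $n' \geq N + 1$, then real Player 2's first $n' - 1$ moves include her first $N$, hence at least $|e| - 1$ vertices of $e$, contradicting the real humiliating-win condition.

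The main obstacle is the bookkeeping for the substitution mechanism: I must verify that the shadow game remains a legal play of $\sigma$ against a valid shadow opponent, and that the self-cancelling structure really does leave real Player 2 with at least $|e| - 1$ vertices of $e$ after $N$ moves. Both are routine extensions of the ghost-replacement step in Lemma \ref{steal}, since each substitution exchanges $\pi_j$ for $\rho_j = x_j$ and consecutive substitutions cancel, leaving only the initial $p_1$ and the last $x_k$ as the net discrepancy. With these details in place the case split above completes the contradiction, so Player 1 has no humiliating-win strategy, and by determinacy Player 2 can prevent one.
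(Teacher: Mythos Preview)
Your argument is correct, but it takes a considerably longer route than the paper's. The paper dispatches the result in two lines: after Player~1's first move $x$, pass to the hypergraph $\hhh\setminus x$ whose edges are $e\setminus\{x\}$ for $e\in E(\hhh)$. In this modified game Player~2 moves first, and a humiliating win for Player~1 in $\hhh$ would be a strong win for Player~1 as \emph{second} player in $\hhh\setminus x$ (completing $e$ in $\hhh$ is completing $e\setminus\{x\}$ in $\hhh\setminus x$; Player~2 having $\abs{f}-1$ points of $f$ in $\hhh$ forces her to have all of $f\setminus\{x\}$). Lemma~\ref{steal} then finishes immediately.

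What you do instead is unfold this reduction by hand: you run the strategy-stealing argument directly at the level of humiliating wins, having Player~2 steal $\sigma$ while Player~1 simultaneously plays $\sigma$, and then compare the real and shadow games via a two-case analysis on $n'$ versus $N$. The substitution invariant you need (that the shadow-$P_1$ set and the real-$P_2$ set differ by at most the single swap $p_1\leftrightarrow g$) does hold, and your case split is sound; the appeal to open determinacy at the end is also legitimate and is implicitly present in the paper's argument as well. The paper's reduction buys brevity and makes transparent \emph{why} the humiliating-win notion sits exactly one move below the strong-win notion, whereas your direct approach is self-contained and avoids introducing the auxiliary hypergraph $\hhh\setminus x$, at the cost of the ghost bookkeeping and the case analysis.
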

\begin{proof}
Denote by $x$ the vertex Player 1 chooses on his first move.  The hypergraph $\hhh\setminus x$ is the hypergraph with vertex-set $V\setminus\{x\}$ and edges $e\setminus \{x\}$ for each $e\in E(H)$.  We see that Player 1 has a humiliating win on $\hhh$ only if he has a strong win on $\hhh\setminus \{x\}$ \emph{as a second player}, and we are done by Lemma \ref{steal}.  
\end{proof}

Lemma \ref{steal} is deceptive in its simplicity.  Of course we emphasize that the strategy stealing argument shows only the existence of a strategy for a first player to prevent a second player strong win.  In general, we have no better way to find such a strategy than the na\'ive `backwards labeling' method, which runs on the whole game tree.  Thus, though Fact \ref{nohum} tells us that Player 2 should never fall more than one behind Player 1 (in the sense of Definition \ref{hum}), it is quite possible for this to happen in actual play between good (yet imperfect) players.
\end{q}

\subsubsection*{Acknowledgment}
I'd like to thank J\'ozsef Beck for discussing with me the questions I consider here, and for helpful suggestions regarding the presentation.

\end{document}